\documentclass[a4paper, reqno, 11pt]{amsart}
\makeatletter

\@addtoreset{equation}{section}
\makeatother
\usepackage{setspace}
\usepackage{xcolor}
\singlespacing
\usepackage{amsmath}
\usepackage{amssymb}
\usepackage{latexsym}
\usepackage{amsthm}
\usepackage{hyperref}
\newtheorem{Thm}{Theorem}[section]

\newtheorem{Lem}[Thm]{Lemma}

\newtheorem{Cor}[Thm]{Corollary}

\theoremstyle{definition}

\newtheorem{Rem}[Thm]{Remark}

\begin{document}

\title[]{Non-existence of measurable solutions of certain functional equations via probabilistic approaches}
\author{Kazuki Okamura}
\address{School of General Education, Shinshu University}
\curraddr{Department of Mathematics, Faculty of Science, Shizuoka University}
\email{okamura.kazuki@shizuoka.ac.jp}
\subjclass[2000]{39B22, 62E10}
\keywords{Functional equations; Measurability; Uniform distribution; Cauchy distribution; Dirac measure}
\date{}

\maketitle

\begin{abstract}
This paper deals with functional equations  in the form of  $f(x) + g(y) = h(x,y)$ where $h$ is given and $f$ and $g$ are unknown. 
We will show that if $h$ is a Borel measurable function associated with characterizations of the uniform or Cauchy distributions, 
then there is no measurable solutions of the equation. 
Our proof uses a characterization of the Dirac measure and it is also applicable to the arctan equation. 
\end{abstract}

\section{Introduction}

In this paper, we consider Borel measurable solutions $f, g : B \to \mathbb{R}$ of 
\begin{equation}\label{gen} 
f(x) + g(y) = h(x,y), \  \ x, y \in B,  
\end{equation}
where $B$ is a Borel subset of $\mathbb{R}$ and  $h : B^2 \to \mathbb{R}$ is a Borel measurable function.  

Although in \eqref{gen} there are numerous choices for the Borel measurable function $h$, 
we assume that $h$ is a composition of a non-constant Borel measurable function and a Borel measurable function $H$ satisfying that 
if $X$ and $Y$ are independent random variables with a common distribution, then $H(X, Y)$ and $Y$ are also independent. 
There would be numerous choices also for such $H$, however it is not necessarily easy to find such $H$.  
The more complicated $H$ is, the harder to show the independence of $H(X, Y)$ and $Y$ would be. 

In this paper we assume that $H$ appears in the context of characterizations of the uniform distribution or the Cauchy distribution. 
Here characterization means that if $H(X, Y)$ and $Y$ are independent then $X$ follows the uniform distribution or the Cauchy distribution.  
We do {\it not} directly use those characterizations, instead we use a characterization of the Dirac measure. %

Our approach is similar to that of Smirnov \cite{Smirnov2019}. %
\cite{Smirnov2019} shows that any measurable solution of the Cauchy functional equation is locally-integrable, by applying the Kac-Bernstein theorem,  which gives a characterization of the normal distribution.   
Recently, Mania \cite{Mania2020} and Mania and Tikanadze \cite{Mania2019} consider martingale characterizations of measurable solutions of the Cauchy, Abel and Lobachevsky functional  equations.   

The purpose of this paper is showing that a probabilistic approach is  also applicable to more complicated functional equations than the Cauchy, Abel and Lobachevsky equations. 
Our proof uses a characterization of the Dirac measure, not of the normal distribution. %
 Our proof is also applicable to the arctan equation.

Now we state our main results. 
\begin{Thm}\label{main-uni}
Let $h : (0,1) \to (0,1)$ be  a Borel measurable map preserving the Lebesgue measure $\ell$. 
Let $j : (0,1] \to \mathbb{R}$ be  a Borel measurable map such that $\ell \left(\left\{x \in (0,1] : j(x) = c \right\}\right) < 1$ for every $c \in \mathbb{R}$. 
Then, there is no measurable solutions $f, g : (0,1) \to (0,1)$ of  
\begin{equation}\label{main-1} 
f(x) + g(y) = j \left(\min\left\{\frac{h (x)}{y}, \frac{1 - h (x)}{1 - y} \right\} \right), \ x, y \in (0,1). 
\end{equation} 
\end{Thm}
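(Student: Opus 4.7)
The plan is a proof by contradiction using a probabilistic reduction. Let $X, Y$ be independent random variables uniformly distributed on $(0,1)$. Because $h$ preserves Lebesgue measure, $h(X)$ is also uniform on $(0,1)$ and independent of $Y$. A direct computation shows that for any $y \in (0,1)$ and $t \in (0,1]$ the event $\min\{h(X)/y,(1-h(X))/(1-y)\} \le t$ equals $\{h(X) \le ty\} \cup \{h(X) \ge 1 - t(1-y)\}$, which has probability $ty + t(1-y) = t$. Consequently $W := \min\{h(X)/Y,\,(1-h(X))/(1-Y)\}$ is uniformly distributed on $(0,1)$ and independent of $Y$; this is the classical characterization of the uniform distribution alluded to in the introduction.

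Suppose now that $(f,g)$ is a measurable solution of \eqref{main-1}. Substituting gives $f(X) + g(Y) = j(W)$ almost surely. Set $A := f(X)$ and $B := g(Y)$: then $A,B$ are independent, and $A+B = j(W)$ is independent of $B$ because $W$ and $Y$ are independent. Comparing the joint characteristic function of $(A+B, B)$ with the product of its marginals yields $\phi_A(s)\phi_B(s+t) = \phi_A(s)\phi_B(s)\phi_B(t)$. Continuity of $\phi_A$ with $\phi_A(0)=1$ ensures $\phi_A \neq 0$ on a neighborhood of $0$, so $\phi_B$ satisfies the multiplicative Cauchy equation $\phi_B(s+t)=\phi_B(s)\phi_B(t)$ locally, and by iteration and continuity globally. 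The only continuous characteristic-function solution is $\phi_B(t) = e^{i\mu t}$, so $B = g(Y)$ is almost surely constant. This is the ``characterization of the Dirac measure'' the paper refers to. Hence $g \equiv \mu$ on a full-measure set $E \subset (0,1)$.

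It remains to show that $j$ is essentially constant on $(0,1]$, which contradicts the hypothesis on $j$. For every $y_1, y_2 \in E$ and every $x \in (0,1)$ the equation forces $j(H(x,y_1)) = j(H(x,y_2))$ where $H(x,y) = \min\{h(x)/y, (1-h(x))/(1-y)\}$. Writing $\phi_y(u) := \min\{u/y, (1-u)/(1-y)\}$ and using that $h$ preserves Lebesgue measure, this becomes $j(\phi_{y_1}(u)) = j(\phi_{y_2}(u))$ for a.e.\ $u \in (0,1)$. Restricting to $u < y_1 < y_2$ gives $j(u/y_1)= j(u/y_2)$, and after rescaling $j(v) = j(\lambda v)$ for a.e.\ $v \in (0,1)$ with $\lambda = y_1/y_2$. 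As $y_1, y_2$ range over $E$ the ratio $\lambda$ covers a full-measure subset of $(0,1)$, so the identity holds for a.e.\ $(\lambda,v) \in (0,1)^2$. A Fubini exchange yields: for a.e.\ $v$, $j(v) = j(w)$ for a.e.\ $w \in (0,v)$. Comparing this statement at two generic levels $v < v'$ forces $j$ to be essentially constant on $(0,1]$, contradicting $\ell(\{j = c\}) < 1$ for every $c \in \mathbb{R}$.

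The hardest step is the third one. Steps one and two are clean applications of known characterizations (uniform distribution via a conditional CDF, Dirac measure via the multiplicative Cauchy equation for characteristic functions), but step three requires careful bookkeeping: propagating the pointwise-in-$x$ identity through the measure-preserving substitution $u = h(x)$, verifying that the ratios $y_1/y_2$ with $y_i \in E$ cover almost every $\lambda \in (0,1)$, and performing the Fubini exchange of quantifiers to extract essential constancy of $j$.
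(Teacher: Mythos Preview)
Your first two steps coincide with the paper's: the independence of $W=\min\{h(X)/Y,(1-h(X))/(1-Y)\}$ and $Y$ (the paper cites \cite[Theorem~3.2]{Den1992}, you compute it directly), followed by the Dirac characterization to conclude that $g=\mu$ on a full-measure set $E$.

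The divergence is in the contradiction step. You show that $j$ is \emph{essentially constant} on $(0,1]$ via a scaling and Fubini argument: from $j(\phi_{y_1}(u))=j(\phi_{y_2}(u))$ for a.e.\ $u$ you extract $j(v)=j(\lambda v)$ for a.e.\ $v$ with $\lambda=y_1/y_2$, observe that these ratios fill a full-measure set of $(0,1)$, swap quantifiers by Fubini, and conclude. This is correct, and the bookkeeping you flag (pushing the pointwise identity through $u=h(x)$ via measure-preservation, covering a.e.\ $\lambda$, the Fubini exchange) all goes through. The paper, by contrast, never proves $j$ is essentially constant. It simply fixes one $x_0$ with $h(x_0)$ small enough that the range $\{H_{x_0}(y):y\in(0,1)\}=(\min\{h(x_0),1-h(x_0)\},1]$ meets both $\{j<c_1\}$ and $\{j>c_1\}$ in positive measure, then picks $y_1,y_2\in E$ with $j(H_{x_0}(y_1))>c_1>j(H_{x_0}(y_2))$ and reads off the contradiction $f(x_0)+\mu=j(H_{x_0}(y_1))=j(H_{x_0}(y_2))$ directly from \eqref{main-1}.

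Your route is sound but more laborious; the paper's buys a two-line finish by exploiting that the functional equation holds \emph{pointwise} at a single well-chosen $x_0$, avoiding the scaling and Fubini machinery entirely.
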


The conditions for the functions $h$ and $j$ are not restrictive. 
There are many candidates for measure-preserving transformations of $(0,1)$.  
For example, 
$$h(x) = \begin{cases} 2x \ \ \ \ \  \ \ 0 < x < \dfrac{1}{2} \\ \dfrac{1}{2} \ \  \ \  \ \ \ \  x = \dfrac{1}{2} \\ 2x-1 \ \ \dfrac{1}{2} < x < 1  \end{cases}$$
and
$$h(x) = \begin{cases} x + a \ \ \ \ \ \ \ \ \ 0 < x < 1-a \\ a \ \  \ \ \   \  \ \ \ \ \ \ \ \ x = 1-a  \\ x-1+a \ \ \ \ 1-a < x < 1  \end{cases}$$
are measure-preserving transformations of $(0,1)$. 

\begin{Thm}\label{main-uni-2}
Let $h : (0,1) \to (0,1)$ be  a Borel measurable map preserving the Lebesgue measure $\ell$. 
Let $j : [0,1) \to \mathbb{R}$ be  a Borel measurable map such that $\ell (\{x \in [0,1): j(x) = c\}) < 1$ for every $c \in \mathbb{R}$. 
Then, there is no Borel measurable solutions $f, g : (0,1) \to (0,1)$ of 
\begin{equation}\label{main-2} 
f(x) + g(y) = j \left(\pi(h (x) + y) \right), \ x, y \in (0,1),
\end{equation} 
where $\pi(z)$ denotes the fractional part of $z$. 
\end{Thm}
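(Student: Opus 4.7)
My plan is to follow the same template as the proof of Theorem~\ref{main-uni}, now exploiting that addition modulo $1$ is the natural operation under which Lebesgue measure on $[0,1)$ is invariant. I will let $X$ and $Y$ be independent random variables uniformly distributed on $(0,1)$. Since $h$ preserves $\ell$, the random variable $V:=h(X)$ is uniform on $(0,1)$ and independent of $Y$, and a short Fubini argument shows that $U:=\pi(V+Y)=\pi(h(X)+Y)$ is uniform on $[0,1)$ and independent of $Y$; the same computation gives that the conditional distribution of $U$ given $X$ is also uniform on $[0,1)$. These two independence/uniformity statements will be the only analytic inputs needed.

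Assume for contradiction that a measurable solution $(f,g)$ exists. Since $f$ and $g$ take values in $(0,1)$, the identity $f(X)+g(Y)=j(U)$ already forces $j(U)\in(0,2)$ almost surely, so $j$ is $\ell$-essentially bounded on $[0,1)$ and every conditional expectation below is legitimate. Taking $E[\,\cdot\mid Y]$ in the identity and using independence of $U$ and $Y$ yields
\begin{equation*}
E[f(X)]+g(Y)=E[j(U)] \qquad \text{a.s.,}
\end{equation*}
so $g$ is $\ell$-a.e.\ equal to a single constant; symmetrically, conditioning on $X$ and using the conditional uniformity of $U$ given $X$ forces $f$ to be $\ell$-a.e.\ constant. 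This is the step where the characterization of the Dirac measure alluded to in the introduction enters: each summand $f(X)$, $g(Y)$ is forced to be degenerate.

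Writing $c_f$ and $c_g$ for these essentially constant values, the original equation reduces to $j(\pi(h(x)+y))=c_f+c_g$ for $(\ell\times\ell)$-almost every $(x,y)\in(0,1)^2$. Pushing forward under $(x,y)\mapsto\pi(h(x)+y)$, whose image measure is $\ell$ on $[0,1)$, gives $j=c_f+c_g$ $\ell$-a.e.\ on $[0,1)$, contradicting the hypothesis $\ell(\{x\in[0,1):j(x)=c\})<1$ for every $c\in\mathbb{R}$.

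The main obstacle, I expect, is not the conditional-expectation bookkeeping but cleanly setting up the joint independence and uniformity statements that drive it. They are elementary once the Fubini expression is written out, but they rely on two facts simultaneously: that $h$ preserves $\ell$ on $(0,1)$, and that $\ell$ on $[0,1)$ is invariant under translation mod $1$. It is the combination that ensures $U=\pi(h(X)+Y)$ is uniform and independent of $Y$ (and uniform conditionally on $X$), and hence makes the whole argument go through.
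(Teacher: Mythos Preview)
Your argument is correct, but it follows a different route from the paper's. The paper only shows $g$ is a.e.\ constant, and it does so by invoking Lemma~\ref{Dirac}: from the independence of $f(U)$ and $g(V)$ together with the independence of $f(U)+g(V)=j(\pi(h(U)+V))$ and $g(V)$, the characteristic-function argument of Lemma~\ref{Dirac} forces $g(V)$ to be degenerate. You instead take conditional expectations, using that $f,g$ map into $(0,1)$ and are therefore integrable; this is more elementary and simultaneously gives that \emph{both} $f$ and $g$ are a.e.\ constant, but note that it genuinely needs the boundedness hypothesis on $f,g$, whereas Lemma~\ref{Dirac} does not---so your method would not transfer unchanged to the setting of Theorem~\ref{main-Cauchy}, where $f,g:\mathbb{R}\to\mathbb{R}$. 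For the final contradiction the paper fixes an $x$ and exhibits $y_1,y_2$ with $g(y_1)=g(y_2)=c_0$ but $j(H_x(y_1))\neq j(H_x(y_2))$; your pushforward argument (the law of $(x,y)\mapsto\pi(h(x)+y)$ under $\ell\times\ell$ is $\ell$ on $[0,1)$) is cleaner and reaches the same contradiction. One small remark: your sentence ``This is the step where the characterization of the Dirac measure \ldots enters'' is misleading, since your conditional-expectation computation does not actually use Lemma~\ref{Dirac} at all.
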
 

Although \eqref{main-2} might look largely different from \eqref{main-1}, 
our proofs of these theorems are very similar to each other. 
 
Let the standard Cauchy measure be the probability measure on $\mathbb{R}$ with density $\dfrac{1}{\pi (1+x^2)}$.  
\begin{Thm}\label{main-Cauchy}
Let $h : \mathbb{R} \to \mathbb{R}$ be a Borel measurable map preserving the standard Cauchy measure. 
Let $j : \mathbb{R} \to \mathbb{R}$ be a Borel measurable map such that $\ell \left(\left\{x \in \mathbb{R}  : j(x) = c \right\}\right) < 1$ for every $c \in \mathbb{R}$. 
Then, there is no Borel measurable solutions $f, g : \mathbb{R} \to \mathbb{R}$ of  
\begin{equation}\label{main-3} 
f(x) + g(y) = j \left(\frac{h (x) + y}{1 - h (x) y}\right), \ x, y \in \mathbb{R},
\end{equation} 
\end{Thm}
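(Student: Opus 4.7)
My plan mirrors the probabilistic approach taken for Theorems~\ref{main-uni} and \ref{main-uni-2}: transfer the functional equation onto i.i.d.\ Cauchy random variables and then apply a characterization of the Dirac measure to force a contradiction with the non-degeneracy hypothesis on $j$.

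Let $X$ and $Y$ be independent standard Cauchy random variables. Since $h$ preserves the Cauchy measure, $(h(X),Y)$ is also i.i.d.\ standard Cauchy. Writing $H(a,b):=(a+b)/(1-ab)$ and $Z:=H(h(X),Y)$, the crucial probabilistic ingredient is that $Z$ is standard Cauchy \emph{and} independent of $Y$. I would justify this via the tangent addition formula $\arctan H(a,b)\equiv \arctan a+\arctan b\pmod{\pi}$: if $U,V$ are i.i.d.\ uniform on $(-\pi/2,\pi/2)$, then $(U+V)\bmod\pi$ is uniform on $(-\pi/2,\pi/2)$ and independent of $V$ by shift-invariance of the uniform distribution on $\mathbb{R}/\pi\mathbb{Z}$; now take $\tan$ of both sides.

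Evaluating the equation at $(X,Y)$ gives $f(X)+g(Y)=j(Z)$ almost surely. Set $A:=f(X)=j(Z)-g(Y)$ and $B:=g(Y)$. From $X\perp Y$ we get $A\perp Y$, whence $A\perp B$; from $Z\perp Y$ we get $A+B=j(Z)\perp B$. Applying the Dirac characterization---if real random variables $A,B$ satisfy $A\perp B$ and $A+B\perp B$, then $B$ is a.s.\ constant---forces $g\equiv\alpha$ Lebesgue-a.e.\ for some $\alpha\in\mathbb{R}$. This characterization follows from the identity $\varphi_A(s)\varphi_B(s+t)=\varphi_A(s)\varphi_B(s)\varphi_B(t)$; since $\varphi_A(s)\neq 0$ near $s=0$, this reduces to Cauchy's multiplicative functional equation for $\varphi_B$ locally, whence $\varphi_B(t)=e^{i\alpha t}$.

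Substituting $g\equiv\alpha$ back and using Fubini, for Lebesgue-a.e.\ $x$ the map $y\mapsto j\bigl((h(x)+y)/(1-h(x)y)\bigr)$ takes the constant value $f(x)+\alpha$ for a.e.\ $y$. But $y\mapsto(h(x)+y)/(1-h(x)y)$ is a M\"obius transformation from $\mathbb{R}\setminus\{1/h(x)\}$ onto $\mathbb{R}\setminus\{-1/h(x)\}$, and its pushforward of Lebesgue measure is mutually absolutely continuous with Lebesgue; hence $j$ itself equals $f(x)+\alpha$ Lebesgue-a.e.\ on $\mathbb{R}$, giving $\ell(\{j=f(x)+\alpha\})=\infty\geq 1$---contradicting the hypothesis. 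The principal obstacle is rigorously establishing $Z\perp Y$ for Cauchy inputs (which the $\arctan$ reduction handles cleanly) and setting up the Dirac characterization; the remaining bookkeeping should run parallel to the argument for Theorem~\ref{main-uni}.
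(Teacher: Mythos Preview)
Your proposal is correct and follows essentially the same approach as the paper: transfer the equation to i.i.d.\ Cauchy variables, use the independence of $H(h(X),Y)$ and $Y$ together with the Dirac-measure characterization (the paper's Lemma~\ref{Dirac}) to force $g$ constant a.e., and then contradict the non-degeneracy of $j$. The only cosmetic differences are that the paper cites \cite{Arnold1979} for the independence step where you supply the $\arctan$ argument directly, and the paper derives the final contradiction by picking two points $y_1,y_2$ rather than via the M\"obius pushforward; neither changes the substance.
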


We can construct measurable maps preserving the standard Cauchy measure via measurable maps preserving the Lebesgue measure on an interval. 
If $X$ follows  the standard Cauchy measure and the composition $z \mapsto f(\tan(z))$ preserves the uniform distribution on $(-\pi/2, \pi/2)$, then, 
$f(X)$ also follows the standard Cauchy measure. 

The rest of this paper is organized as follows. 
In section 2, we give a lemma for a characterization of the Dirac measure. 
In Section 3, we give proofs of Theorems \ref{main-uni}, \ref{main-uni-2} and \ref{main-Cauchy}. 
Finally in Section 4, we give a probabilistic approach to the arctan equation as a corollary of Theorem \ref{main-Cauchy}.   

\section{A lemma}

The following plays a crucial role in this paper. 
Although it is probably already known, we give a proof for the sake of completeness. %
\begin{Lem}[Characterization of Dirac measures]\label{Dirac}
If $X$ and $Y$ are independent, and, $X+Y$ and $Y$ are independent, then, $Y$ is a constant a.s. 
\end{Lem}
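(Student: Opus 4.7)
The plan is to translate both independence hypotheses into relations between characteristic functions. Let $\varphi_X(t) = E[e^{itX}]$ and $\varphi_Y(t) = E[e^{itY}]$, and consider the joint characteristic function of the pair $(X+Y,Y)$. Using independence of $X$ and $Y$, one computes
\[
E\bigl[e^{is(X+Y)+itY}\bigr] = E[e^{isX}]\,E[e^{i(s+t)Y}] = \varphi_X(s)\,\varphi_Y(s+t).
\]
Using independence of $X+Y$ and $Y$, the same quantity equals $\varphi_{X+Y}(s)\,\varphi_Y(t) = \varphi_X(s)\,\varphi_Y(s)\,\varphi_Y(t)$, where the second equality again invokes independence of $X$ and $Y$. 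Equating the two expressions yields the key identity
\[
\varphi_X(s)\,\varphi_Y(s+t) = \varphi_X(s)\,\varphi_Y(s)\,\varphi_Y(t), \qquad s,t\in\mathbb{R}.
\]

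Next I would cancel $\varphi_X(s)$. Since $\varphi_X$ is continuous and $\varphi_X(0)=1$, there exists $\delta>0$ with $\varphi_X(s)\neq 0$ for $|s|<\delta$. On this strip the identity reduces to the multiplicative functional equation
\[
\varphi_Y(s+t) = \varphi_Y(s)\,\varphi_Y(t), \qquad |s|<\delta,\ t\in\mathbb{R}.
\]
A short iteration extends this to all of $\mathbb{R}^2$: any real $s$ decomposes as a finite sum $s = s_1+\cdots+s_n$ with $|s_i|<\delta$, and repeated application of the local identity gives $\varphi_Y(s+t) = \varphi_Y(s_1)\cdots\varphi_Y(s_n)\,\varphi_Y(t) = \varphi_Y(s)\,\varphi_Y(t)$.

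The final step is to classify the continuous solutions. Any continuous $\varphi_Y:\mathbb{R}\to\mathbb{C}$ with $\varphi_Y(0)=1$, $|\varphi_Y|\leq 1$, and $\varphi_Y(s+t)=\varphi_Y(s)\varphi_Y(t)$ must take the form $\varphi_Y(t)=e^{i\alpha t}$ for some $\alpha\in\mathbb{R}$; the bound $|\varphi_Y|\leq 1$ forces the exponent to be purely imaginary. This is precisely the characteristic function of the Dirac mass at $\alpha$, so by the uniqueness theorem for characteristic functions $Y=\alpha$ almost surely.

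The main delicate point I foresee is the cancellation of $\varphi_X(s)$ together with the passage from the local Cauchy equation to its global form; both steps are standard but must be handled carefully. Once these are in place, the identification of continuous one-parameter multiplicative characters of $\mathbb{R}$ with the functions $t\mapsto e^{i\alpha t}$ completes the argument.
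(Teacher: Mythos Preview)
Your proposal is correct and follows essentially the same route as the paper: compute the joint characteristic function of $(X+Y,Y)$ in two ways, cancel $\varphi_X(s)$ on a neighborhood of the origin, extend the resulting multiplicative Cauchy equation for $\varphi_Y$ to all of $\mathbb{R}$ by iteration, and then classify the continuous bounded solutions as $t\mapsto e^{i\alpha t}$ to conclude that $Y$ is a.s.\ constant. Your write-up of the extension step and the classification is in fact somewhat cleaner than the paper's.
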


As we will see, this assertion  is shown by solving the complex-valued Cauchy equation. 

\begin{proof}
Throughout this proof, the symbol $i$ denotes the imaginary unit. 
Let 
$$\phi_X (t) := E[\exp(i t X)], \phi_Y (t) := E[\exp(i t Y)], \  t \in \mathbb{R}.$$
Since $X$ and $Y$ are independent, 
we have that for every $s, t \in \mathbb{R}$, 
\[ E\left[\exp\left(i \left(s (X + Y) + t Y\right) \right)\right] = E\left[\exp\left(i s (X + Y)  \right)\right] E\left[\exp\left(i  t Y \right)\right]\]
\[ = \phi_X (s) \phi_Y (s) \phi_Y (t). \]
Furthermore, 
\[ E\left[\exp\left(i \left(s (X + Y) + t Y\right) \right)\right] = E\left[\exp\left(i \left(s X + (s + t) Y\right) \right)\right]\]
\[ =  \phi_X (s) \phi_Y (s+t). \]

Since $\phi_X$ is continuous on $\mathbb{R}$ and $\phi_X (0) = 1$, 
we have that for some $\epsilon_0 > 0$, $\phi_X (s) \ne 0$ for $s \in [-\epsilon_0, \epsilon_0]$.

Therefore, 
\begin{equation}\label{limited} 
\phi_Y (s+t)  = \phi_Y (s) \phi_Y (t), \  \ \  s \in  [-\epsilon_0, \epsilon_0], \  t \in \mathbb{R}. 
\end{equation} 

Let $s \in \mathbb{R}$. 
Then, for some $n$, 
$s/n \in  [-\epsilon_0, \epsilon_0]$.
By using \eqref{limited} repeatedly, 
we have that 
\[  \phi_Y (s) = n  \phi_Y \left(\frac{s}{n}\right), \]
and for every $t \in \mathbb{R}$, 
\[ \phi_Y (s+t)  =  n  \phi_Y \left(\frac{s}{n}\right) + \phi_Y (t) =  \phi_Y (s) + \phi_Y (t).   \]

By using the fact that $\phi_Y (0) = 1$ and $\phi$ is continuous again, 
we have that for some constant $c \in \mathbb{C}$, 
\[ \phi_Y (t) = \phi_Y (1)^t = \exp(c t),\ \ t \in \mathbb{R}. \]
Let $c = a+b i$. 
Since $|\phi_Y (t)| \le 1$ for every $t \in \mathbb{R}$, we have that $a = 0$. 

By the Levy inversion formula, 
we see that for some constant $C$, 
$$P(Y = C) = 1.$$ 
\end{proof} 

\section{Proofs}

\begin{proof}[Proof of Theorem \ref{main-uni}]
We assume there exist measurable maps $f, g : (0,1) \to (0,1)$ satisfying \eqref{main-1}. 
Let $U$ and $V$ be two independent random variables with the uniform distribution on $(0,1)$.  
Then, $f(U)$ and $g(V)$ are independent.
By \cite[Theorem 3.2]{Den1992},
$\displaystyle \min\left\{\frac{h (U)}{V}, \frac{1 - h (U)}{1 - V} \right\}$ and $V$ are independent.  
By this and \eqref{main-1}, 
$f(U) + g(V)$ and $g(V)$ are independent. 
Hence by Lemma \ref{Dirac}, there exists a constant $c_0$ such that $g(V) = c_0$ a.s. 
Since $V$ is the uniform distribution on $(0,1)$, $g(y) = c_0$, a.e. $y \in (0,1)$.

For $x \in (0,1)$, let 
\[ H_{x} (y) :=  \min\left\{\frac{h (x)}{y}, \frac{1 - h (x)}{1 - y} \right\}, \ y \in (0,1). \]
Then, for each $x \in (0,1)$,  $H_x$ is continuous with respect to $y$ and 
\[ \left\{H_x (y) | y \in (0,1) \right\} = \left(\min\{h(x), 1- h(x)\}, 1\right]. \]

By the assumption, there exists a constant $c_1$ such that $\ell(\{z \in (0,1] : j(z) < c_1 \}) > 0$ and $\ell(\{z \in (0,1] : j(z) > c_1 \}) > 0$. 
Since the distribution of $h(U)$ is uniform on $(0,1)$ if  the distribution of  $U$ is uniform on $(0,1)$, 
we see that for every $\epsilon > 0$ there exists $x$ such that $h(x) < \epsilon$. 
Hence there exists $x_0 \in (0,1)$ such that $$\ell\left(\left\{z \in (\min\{h(x_0), 1- h(x_0)\},1] : j(z) < c_1 \right\}\right) > 0$$ and %
$$\ell\left(\left\{z \in (\min\{h(x_0), 1- h(x_0)\},1] : j(z) > c_1 \right\}\right) > 0.$$

By this and $\ell(\{y : g(y) \ne c_0\}) = 0$, 
we can pick $y_1$ and $y_2$ such that $g(y_1) = g(y_2) = c_0$ and $j(H_{x} (y_1)) > c_1 > j(H_{x} (y_2))$.   
However, by \eqref{main-1}, 
$$f(x) + c_0 = j(H_{x} (y_1)) = j(H_{x} (y_2)).$$
Thus we have a contradiction.  
\end{proof}

\begin{proof}[Proof of Theorem \ref{main-uni-2}]
We assume there exist measurable maps $f, g : (0,1) \to (0,1)$ satisfying \eqref{main-2}. 
Due to \cite[Theorem 3.2]{Den1992} and Lemma \ref{Dirac}, 
we can show that for some constant $c_0$, $g(y) = c_0$, a.e. $y \in (0,1)$, in the same manner as in the proof of Theorem \ref{main-uni}. 

For $x \in (0,1)$, let 
\[ H_{x} (y) :=  \pi(h(x) + y), \ y \in (0,1). \]
Then, for each $x \in (0,1)$,  
\[ \left\{H_x (y) | y \in (0,1) \right\} = [0,1) \setminus \{h(x)\}. \]

By the assumption, there exists $c_1$ such that $\ell(\{z \in [0,1) : j(z) < c_1 \}) > 0$ and $\ell(\{z \in [0,1) : j(z) > c_1 \}) > 0$. 
By this and $\ell(\{y : g(y) \ne c_0\}) = 0$, 
we can pick $y_1$ and $y_2$ such that $g(y_1) = g(y_2) = c_0$ and $j(H_{x} (y_1)) > c_1 > j(H_{x} (y_2))$.   
However, by \eqref{main-2}, 
$$f(x) + c_0 = j(H_{x} (y_1)) = j(H_{x} (y_2)).$$
Thus we have a contradiction.  
\end{proof} 

\begin{proof}[Proof of Theorem \ref{main-Cauchy}]
We assume there exist measurable maps $f, g : \mathbb{R} \to \mathbb{R}$ satisfying \eqref{main-3}. 

Let $X, Y$ be two independent standard Cauchy distributions. 
Then, by the assumption, $h(X)$ and $Y$ are  independent standard Cauchy distributions.
By \cite{Arnold1979}, $\dfrac{h(X)+Y}{1-XY}$ and $Y$ are independent. 
Hence, $j \left( \dfrac{h(X)+Y}{1- h(X) Y} \right)$ and $g\left( Y \right)$ are independent. 
By Lemma \ref{Dirac} and \eqref{main-3},  
we have that for some constant $c_0$, $g(y) = c_0$, a.e. $y \in \mathbb{R}$.

For $x \in \mathbb{R}$, let 
\[ H_{x} (y) :=  \frac{h (x) + y}{1 - h (x) y}, \  \ y \in \mathbb{R} \setminus \{h(x)\}. \]
Then,   
\[ \left\{H_x (y) | y \in \mathbb{R}  \setminus \{h(x)\} \right\} = \begin{cases} \mathbb{R} \setminus \{-\frac{1}{h(x)}\} \   \ h(x) \ne 0  \\ \mathbb{R} \ \ \ \ \ \  h(x) = 0 \end{cases}. \]

By the assumption, there exists $c_1$ such that $\ell(\{z \in \mathbb{R} : j(z) < c_1 \}) > 0$ and $\ell(\{z \in \mathbb{R}  : j(z) > c_1 \}) > 0$. 
By this and $\ell(\{y : g(y) \ne c_0\}) = 0$, 
we can pick $y_1$ and $y_2$ such that $g(y_1) = g(y_2) = c_0$ and $j(H_{x} (y_1)) > c_1 > j(H_{x} (y_2))$.   
However, by \eqref{main-3}, 
$$f(x) + c_0 = j(H_{x} (y_1)) = j(H_{x} (y_2)).$$
Thus we have a contradiction.  
\end{proof}

\begin{Rem}
(i) In general, if $Z$ is a real-valued random variable and $f : \mathbb{R} \to \mathbb{R}$ is Borel measurable, then, $f(X)$ is also a real-valued random variable. 
However if $f : \mathbb{R} \to \mathbb{R}$ is {\it Lebesgue measurable}, $f(X)$ may not be a real-valued random variable. 
We give an example. 
Let $\varphi : [0,1] \to [0,1]$ be the Cantor function. 
Indeed, let $X(w) := \inf\{y : \varphi(y) = w \}$, then we can regard this as a random variable on $([0,1], \overline{\mathcal{B}([0,1])}, \ell)$, where $\overline{\mathcal{B}([0,1])}$ is the completion of the Borel $\sigma$-algebra with respect to the Lebesgue measure.  
Let $A$ be a non Lebesgue measurable subset of $[0,1]$. 
Let $f(x) = \begin{cases} 1  \ \ x \in X(A)  \\ 0 \ \ x \notin X(A) \end{cases}$. 
Then, 
$X(A)$ is contained in the Cantor set and hence the measure of $X(A)$ is zero and in particular it is Lebesgue measurable.  
Hence $f$ is Lebesgue measurable. 
However $f(X)$ is not Lebesgue measurable.  
This is a minor thing and it does {\it not} invalidate \cite[Section 2]{Smirnov2019}, because there exists a Borel measurable function $g  : \mathbb{R} \to \mathbb{R}$ such that $f = g$, a.e. 
If $g$ is locally-integrable, then $f$ is also  locally-integrable. \\
(ii) We can establish  Theorems \ref{main-uni}, \ref{main-uni-2} and \ref{main-Cauchy} for the case that $j$ is Lebesgue measurable %
and  $h$ is a measure-preserving map  on the completed measure space.  
Since the Cauchy measure is equivalent to the Lebesgue measure, in the statements of Theorems \ref{main-uni}, \ref{main-uni-2} and \ref{main-Cauchy},  we can replace the Borel sigma-algebra with its completion with respect to the Lebesgue measure.  %
\end{Rem}

\section{A probabilistic approach to the arctan equation}

We can even show that every measurable solution of the arctan equation is zero by using Theorem \ref{main-Cauchy}. %
More specifically, 

\begin{Cor}\label{main-arctan}
Every Borel measurable function $f : \mathbb{R} \to \mathbb{R}$ satisfying 
\begin{equation}\label{arctan}
f(x) + f(y) = f\left(\frac{x+y}{1-xy}\right), \ xy \ne 1. 
\end{equation}
is limited to the function $f(x) = 0$ for every $x \in \mathbb{R}$.  %
\end{Cor}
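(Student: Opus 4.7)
The plan is to adapt the argument of Theorem \ref{main-Cauchy} with $g := f$ and $h(x) := x$ (the identity trivially preserves the standard Cauchy measure) in order to force $f$ to be almost everywhere constant, then to identify the constant and finally promote the conclusion from a.e.\ to pointwise via the smoothness of the Möbius map $\phi(x, y) := (x+y)/(1 - xy)$. Concretely, with $X, Y$ independent standard Cauchy random variables, Arnold's characterization implies that $(X+Y)/(1-XY)$ is independent of $Y$; hence, using \eqref{arctan}, $f(X) + f(Y)$ is independent of $f(Y)$. Since $f(X)$ and $f(Y)$ are also independent, Lemma \ref{Dirac} forces $f(Y)$ to be a.s.\ constant, so there exists $c_0 \in \mathbb{R}$ with $f = c_0$ a.e. (using that the Cauchy and Lebesgue measures are equivalent).

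To pin down $c_0$, I observe that for each fixed $x_0 \ne 0$ the partial map $y \mapsto \phi(x_0, y)$ is a smooth diffeomorphism of $\mathbb{R} \setminus \{1/x_0\}$ with nowhere-vanishing derivative $(1+x_0^2)/(1-x_0 y)^2$, so it pulls Lebesgue null sets back to null sets. Consequently, the set of pairs $(x, y)$ with $xy \ne 1$ for which $f(x) = f(y) = f(\phi(x,y)) = c_0$ has full two-dimensional Lebesgue measure, and at any such pair \eqref{arctan} reduces to $2c_0 = c_0$, giving $c_0 = 0$. Hence $f = 0$ a.e.

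To upgrade to $f \equiv 0$, fix arbitrary $x_0 \in \mathbb{R}$ and observe, by the same diffeomorphism argument, that the set of $y$ satisfying $f(y) = 0$, $f(\phi(x_0, y)) = 0$, and $x_0 y \ne 1$ has Lebesgue-null complement and is therefore non-empty. For any such $y$, \eqref{arctan} reduces to $f(x_0) + 0 = 0$, yielding $f(x_0) = 0$. The main obstacle is exactly this final upgrade: the probabilistic argument only delivers an almost-everywhere conclusion, while \eqref{arctan} is required to hold for every $(x, y)$ with $xy \ne 1$, and the bridge is the fact that the smooth structure of $\phi$ off its singular locus preserves the class of Lebesgue null sets, which is precisely the Möbius feature distinguishing the arctan equation.
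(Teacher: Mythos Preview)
Your proposal is correct and follows essentially the same route as the paper: invoke Theorem~\ref{main-Cauchy} (via Arnold's independence result and Lemma~\ref{Dirac}) with $g=f$, $h=\mathrm{id}$, $j=f$ to force $f=c_0$ a.e., then exploit that the M\"obius map $y\mapsto (x+y)/(1-xy)$ has nonvanishing derivative $(1+x^2)/(1-xy)^2$ and hence preserves Lebesgue null sets, first to pin down $c_0=0$ and then to upgrade to $f\equiv 0$. Your execution of the last two steps is in fact a bit more streamlined than the paper's: the paper uses an averaging identity $f(x)=\frac{1}{2\epsilon}\int_{-\epsilon}^{\epsilon} f(F(y))\,dy$ and treats the points $x=0$ and $x=-1$ separately, whereas your argument---choosing a single good $y$ with $f(y)=f(\phi(x_0,y))=0$ for each fixed $x_0$---handles all $x_0$ uniformly.
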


\eqref{arctan} is called the arctan equation. 
Kiesewetter \cite{Kiesewetter1965} showed that every continuous solution of \eqref{arctan} is the constant function taking zero at every point. 
Crstici, Muntean and Vornicescu \cite{Crstici1983} consider \eqref{arctan} on $\{(x,y) | xy < 1\}$ under some additional assumptions
Losonczi \cite{Losonczi1990} shows that \eqref{arctan} has a form of $A (\arctan x)$, where $A$ is an additive function on $\mathbb{R}$ with period $\pi$.

\begin{proof}[Proof of Corollary \ref{main-arctan}]
Assume that $f$ is a Borel measurable solution of \eqref{arctan}. 
By applying Theorem \ref{main-Cauchy} to the case that $g(x) = f(x)$, $h(x) = x$, $j(x) = f(x)$, 
we see that there is no other cases than the case that there exists a constant $C$ such that  
$f(y) = C$ almost everywhere with respect to the Lebesgue measure. 

Let $x_0 \in (0,1)$ be a real number such that $f(x_0) = C$. 
Let $F_0 (y) := \dfrac{x_0 + y}{1 - x_0 y}$.  
Then it is Lipschitz continuous on $\left[-x_0 / 2, x_0 / 2 \right]$. 
Hence, by Rudin \cite[Lemma 7.25]{Rudin1987}, 
\[ \ell \left(\left\{F_0 (y): y \in \left[- \frac{x_0}{2},  \frac{x_0}{2} \right], f(y) \ne C  \right\} \right) = 0, \]
where $\ell$ denotes the one-dimensional Lebesgue measure. 
Since $x_0 \in (0,1)$, 
we have that 
\[ F_0^{\prime}(y)  \ge 4 x_0 (1+x_0) \left(1 - \frac{x_0}{2} \right) > 0, \ y \in \left[ - \frac{x_0}{2},  \frac{x_0}{2} \right],  \]
and hence, 
\[ \ell \left(\left\{F_0 (y): y \in \left[  - \frac{x_0}{2},  \frac{x_0}{2} \right] \right\} \right) > 0. \]
Hence there exists $y_0$ such that $$f(y_0) = f(F_0 (y_0)) = C.$$
Therefore we have that  $$2C = f(x_0) + f(y_0) = f(F_0 (y_0)) = C,$$
which implies $C = 0$. 

We finally show that $f(x) = 0$ for every $x \in \mathbb{R}$. 
Let $x \ne 0$ and $x \ne -1$. 
Let $F (y) := \dfrac{x + y}{1 - x y}$.  
Then this function is well-defined and 
\[ F^{\prime}(y)  = \frac{x + x^2 - x^2 y + x y}{(1 - x y)^2} \ne 0 \]
on a neighborhood $[-\epsilon, \epsilon]$ of $0$. 
In particular $F$ is strictly monotone on  $[-\epsilon, \epsilon]$, and hence, the inverse $F^{-1}$ is Lipschitz on $F([-\epsilon, \epsilon])$. 

Then, by using Rudin \cite[Lemma 7.25]{Rudin1987} again, 
\[ \ell\left( \left\{y \in [-\epsilon, \epsilon] : f(F(y)) \ne 0 \right\}\right) = \ell\left( \left\{F^{-1}(z) : z \in F([-\epsilon, \epsilon]),  f(z) \ne 0 \right\}\right) = 0. \]

Therefore we have that 
\[ f(x) = f(x) + \frac{1}{2\epsilon} \int_{-\epsilon}^{\epsilon} f(y) dy = \frac{1}{2\epsilon} \int_{-\epsilon}^{\epsilon} f(F(y)) dy = 0. \]

By \eqref{arctan}, we have that $f(0) + f(y) = f(y)$ and hence $f(0) = 0$. 
By \eqref{arctan} again, we have that 
$$f(-1) + f(y) = f\left(\frac{y-1}{y+1}\right), \ \ y \ne -1. $$
By substituting $y=1$ in this equation, we see that $f(-1) = 0$.  %

Thus we have that $f(x) = 0$ for every $x \in \mathbb{R}$.  
This completes the proof of Corollary \ref{main-arctan}.
\end{proof}

\bibliographystyle{amsplain}
\bibliography{compare-fe}

\end{document}